\newtheorem{lemma}{Lemma}
\newtheorem{theorem}{Theorem}
\theoremstyle{remark}
\newtheorem{example}{\bf Example}
\newtheorem*{acknowledgements}{\bf Acknowledgements}
\let\wh\widehat
\let\wt\widetilde
\def\mm{\mathrm{m}}
\renewcommand{\d}{{\mathrm d}}
\newcommand{\Cl}{\operatorname{Cl}}
\renewcommand{\Im}{\operatorname{Im}}
\renewcommand{\Re}{\operatorname{Re}}
\begin{document}

\title{Regulator of modular units and~Mahler~measures}

\author{Wadim Zudilin}
\address{School of Mathematical and Physical Sciences,
The University of Newcastle, Callaghan, NSW 2308, AUSTRALIA}
\email{wzudilin@gmail.com}

\thanks{Work is supported by the Australian Research Council.}

\date{14 April 2013. \emph{Revised}: 23 September 2013}

\begin{abstract}
We present a proof of the formula, due to Mellit and Brunault, which evaluates an integral of
the regulator of two modular units to the value of the $L$-series of a modular form of weight~2 at $s=2$.
Applications of the formula to computing Mahler measures are discussed.
\end{abstract}

\subjclass[2010]{Primary 11F67; Secondary 11F11, 11F20, 11G16, 11G55, 11R06, 14H52, 19F27}
\keywords{Regulator, Mahler measure, $L$-value of elliptic curve}

\maketitle

\section{Introduction}
\label{s1}

The work of C.~Deninger \cite{De97}, D.~Boyd \cite{B98}, F.~Rodriguez-Villegas \cite{RV99} and others
provided us with a natural link between the (logarithmic) Mahler measures
\begin{equation*}
\mm\bigl(P(x_1,\dots,x_m)\bigr)
:=\frac1{(2\pi i)^m}\idotsint\limits_{|x_1|=\dots=|x_m|=1}
\log|P(x_1,\dots,x_m)|\,\frac{\d x_1}{x_1}\dotsb\frac{\d x_m}{x_m}
\end{equation*}
of certain (Laurent) polynomials $P(x_1,\dots,x_m)$, higher regulators
and Be\u\i linson's conjectures, though it took a while for those original ideas to become proofs
of some conjectural evaluations of Mahler measures. In this note we mainly discuss a recent general
formula for the regulator of two modular units due to A.~Mellit and F.~Brunault, its consequences for
2-variable Mahler measures and some related problems.

\medskip
For a smooth projective curve $C$ given as the zero locus of a polynomial $P(x,y)\in\mathbb C[x,y]$
and two rational non-constant functions $g$ and $h$ on~$C$, define the 1-form
\begin{equation}
\eta(g,h):=\log|g|\,\d\arg h-\log|h|\,\d\arg g;
\label{eta}
\end{equation}
here $\d\arg g$ is globally defined as $\Im(\d g/g)$. The form \eqref{eta} is a real 1-form defined
and infinitely many times differentiable on $C\setminus S$, where $S$ is the set of zeros and poles
of $g$ and~$h$. Furthermore, it is not hard to verify that the form~\eqref{eta} is antisymmetric,
bi-additive and closed; the latter fact follows from
$$
\d\eta(g,h)=\Im\biggl(\frac{\d g}g\wedge\frac{\d h}h\biggr)=0,
$$
as the curve $C$ has dimension~1. In turn, the closedness of~\eqref{eta} implies that,
for a closed path $\gamma$ in $C\setminus S$, the regulator map
\begin{equation}
r(\{g,h\})\colon\gamma\mapsto\int_\gamma\eta(g,h)
\label{reg}
\end{equation}
only depends on the homology class $[\gamma]$ of $\gamma$ in $H_1(C\setminus S,\mathbb Z)$.

Assuming that the polynomial $P(x,y)$ is tempered \cite{Be04,RV99},
factorising it as a polynomial in~$y$ with coefficients from $\mathbb C[x]$,
$$
P(x,y)=a_0(x)\prod_{j=1}^n(y-y_j(x)),
$$
and applying Jensen's formula, we can write \cite{Be04,Co04,LR07,RV99} the Mahler measure of~$P$ in the form
\begin{equation}
\mm\bigl(P(x,y)\bigr)
=\mm\bigl(a_0(x)\bigr)+\frac1{2\pi}\,r(\{x,y\})([\gamma]),
\label{mm-reg}
\end{equation}
where
\begin{equation}
\gamma:=\bigcup_{j=1}^n\bigl\{(x,y_j(x)):|x|=1,\;|y_j(x)|\ge1\bigr\}
=\{(x,y)\in C:|x|=1,\;|y|\ge1\}
\label{gamm}
\end{equation}
is the union of at most $n$ closed paths in $C\setminus S$.

In case the curve $C:P(x,y)=0$ admits a parameterisation by means of modular \emph{units} $x(\tau)$ and $y(\tau)$,
where the modular parameter $\tau$ belongs to the upper halfplane $\mathbb H=\{\tau\in\mathbb C:\Im\tau>0\}$,
one can change to the variable $\tau$ in the integral \eqref{reg} for $r(\{x,y\})$;
the class $[\gamma]$ in this case~\cite{Br08}
becomes a union of paths joining certain cusps of the modular functions $x(\tau)$ and $y(\tau)$.
The following general result completes the computation of the Mahler measure in the case when
$x(\tau)$ and $y(\tau)$ are given as quotients/products of modular units
\begin{gather}
g_a(\tau):=q^{NB(a/N)/2}\prod_{\substack{n\ge1\\n\equiv a\bmod N}}(1-q^n)\prod_{\substack{n\ge1\\n\equiv-a\bmod N}}(1-q^n),
\quad q=\exp(2\pi i\tau),
\label{mb01}
\\
\text{where}\quad B(x)=B_2(x):=\{x\}^2-\{x\}+\tfrac16.
\nonumber
\end{gather}

\begin{theorem}[Mellit--Brunault \cite{Me12}]
\label{MB}
For $a$, $b$ and $c$ integral, with $ac$ and $bc$ not divisible by~$N$,
\begin{equation}
\int_{c/N}^{i\infty}\eta(g_a,g_b)=\frac1{4\pi}\,L(f(\tau)-f(i\infty),2),
\label{mainmb}
\end{equation}
where the weight $2$ modular form $f(\tau)=f_{a,b;c}(\tau)$ is given by
\begin{equation*}
f_{a,b;c}:=e_{a,bc}e_{b,-ac}-e_{a,-bc}e_{b,ac}
\end{equation*}
and
\begin{equation}
e_{a,b}(\tau):=\frac12\biggl(\frac{1+\zeta_N^a}{1-\zeta_N^a}+\frac{1+\zeta_N^b}{1-\zeta_N^b}\biggr)
+\sum_{m,n\ge1}(\zeta_N^{am+bn}-\zeta_N^{-(am+bn)})q^{mn},
\quad \zeta_N:=\exp(2\pi i/N),
\label{eab}
\end{equation}
are weight $1$ level $N^2$ Eisenstein series.
\end{theorem}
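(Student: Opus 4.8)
The plan is to evaluate the left-hand side by passing to the modular parameter and integrating along the vertical geodesic $\tau = c/N + it$, $0 < t < \infty$, which joins the cusp $c/N$ to $i\infty$. Writing $\log g_a(\tau) = \pi i N B(a/N)\tau - \sum_{n\equiv\pm a}\sum_{k\ge1}\frac{q^{nk}}{k}$ and, after logarithmic differentiation, $\frac{1}{2\pi i}\frac{d}{d\tau}\log g_b = \frac{NB(b/N)}{2} - \sum_{m\equiv\pm b}\sum_{l\ge1} m\,q^{ml}$, I would first record the two real quantities entering $\eta(g_a, g_b) = \log|g_a|\,d\arg g_b - \log|g_b|\,d\arg g_a$. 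On the chosen path $q = \zeta_N^c e^{-2\pi t}$, so $\log|g_a| = \Re\log g_a$ carries a term linear in $t$ together with a Lambert series in $e^{-2\pi t}$, whereas $d\arg g_b = \Im(d\log g_b)$ is a pure Lambert series (its constant term is real and drops out). A short estimate shows that $\eta(g_a, g_b)$ decays exponentially as $t\to\infty$ and, using $ac, bc\not\equiv 0\pmod N$, remains integrable as $t\to 0^+$, so the integral converges and may be computed term by term.

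Next I would integrate the resulting products over $t\in(0,\infty)$. Two elementary Mellin integrals appear: the cross terms of one Lambert series against another give $\int_0^\infty e^{-2\pi t(nk+ml)}\,dt = \frac{1}{2\pi(nk+ml)}$, while the term linear in $t$ from $\log|g_a|$ (and its companion from $\log|g_b|$) pairs with a Lambert series to give $\int_0^\infty t\,e^{-2\pi t(ml)}\,dt = \frac{1}{(2\pi ml)^2}$. Collecting the phases $\cos(2\pi cnk/N)$ and $\sin(2\pi cml/N)$ produced by $q = \zeta_N^c e^{-2\pi t}$, and using the $\pm a,\pm b$ ranges of summation together with the antisymmetry of $\eta$, the output is a finite combination of Dirichlet-type double series whose character data are governed by the residues $a$, $b$, $ac$ and $bc$ modulo $N$.

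The heart of the argument is to recognise this combination as $\frac{1}{4\pi}L(f-f(i\infty),2)$. Here I would invoke the Rogers--Zudilin reindexing: the weight-mismatched pieces carrying $1/(nk+ml)$ and $1/(ml)^2$ are reorganised, by interchanging the inner and outer summations in the two product-exponent expansions, into the uniform shape $\sum_{\nu_1,\nu_2\ge1}\frac{A_{\nu_1}B_{\nu_2}}{(\nu_1+\nu_2)^2}$ together with the constant$\times$series pieces $\sum_{\nu}\frac{A_\nu}{\nu^2}$ and $\sum_\nu\frac{B_\nu}{\nu^2}$. Comparing with the Fourier coefficients of $f_{a,b;c} = e_{a,bc}e_{b,-ac} - e_{a,-bc}e_{b,ac}$ — for which $c_\nu = \sum_{\nu_1+\nu_2=\nu}A_{\nu_1}B_{\nu_2}$ plus the analogous constant-term contributions, so that $L(f-f(i\infty),2) = \sum_\nu c_\nu\nu^{-2}$ — one checks that the $A$'s and $B$'s are exactly the coefficients $\zeta_N^{am+bcn}-\zeta_N^{-(am+bcn)}$ and $\zeta_N^{bm-acn}-\zeta_N^{-(bm-acn)}$ read off from \eqref{eab}, with the four sign choices assembling into the displayed difference of products.

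The main obstacle is precisely this Rogers--Zudilin step: matching the two a priori different powers of the summation variables and tracking the numerous $\zeta_N$-phase factors so that the four bilinear pieces fuse into the antisymmetric combination $e_{a,bc}e_{b,-ac}-e_{a,-bc}e_{b,ac}$ rather than into some other linear combination of Eisenstein products. Two subsidiary points must also be dispatched: verifying that the boundary contribution at the cusp $c/N$ vanishes (which is where the hypotheses $ac, bc\not\equiv 0\pmod N$ are used to exclude divergent constant terms), and confirming that the constant term of $f$ is correctly removed, accounting for the normalisation $f-f(i\infty)$ on the right-hand side. The overall constant $\frac{1}{4\pi}$ then follows by bookkeeping the factors $2\pi$ coming from $d\arg$ and from the Mellin integrals.
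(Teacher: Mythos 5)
Your setup---parameterising the geodesic $\tau=c/N+it$, expanding the modular units as Lambert series, integrating termwise and recognising the result as $L(f,2)$---matches the paper's, and you correctly identify the Rogers--Zudilin reindexing as the heart of the matter. But the way you describe that step, namely ``interchanging the inner and outer summations'' so that the pieces carrying $1/(nk+ml)$ become $\sum A_{\nu_1}B_{\nu_2}/(\nu_1+\nu_2)^2$, cannot work as stated: no reindexing of a series raises the power of its denominators from one to two. The missing power of $\nu$ comes from two modular transformations that your outline never invokes. First, the paper does \emph{not} use the direct $q$-expansion for both factors of $\eta(g_a,g_b)$: it uses the involution $\tau\mapsto(c\tau-(c^2+1)/N)/(N\tau-c)$, which fixes the geodesic (sending $c/N+it$ to $c/N+i/(N^2t)$), together with Yang's transformation formula for generalised Dedekind eta functions, to re-expand $\log|g_a|$ in powers of $\exp(-2\pi/(N^2t))$ while keeping $\d\arg g_b$ in powers of $\exp(-2\pi t)$ (Lemma~\ref{lem1}). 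Each term of the product is then $\exp(-2\pi(m_1n_1/(N^2t)+m_2n_2t))$, rapidly decaying at both ends of the contour; this is also what justifies termwise integration near $t\to0^+$, which your ``short estimate'' glosses over, since the pure Lambert-series expansion sits on the circle of convergence there and yields only a conditionally convergent quadruple sum.

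Second, after the substitution $u=n_2t/m_1$ regroups the exponent as $n_1n_2/(N^2u)+m_1m_2u$ and factors the quadruple sum into a product of weight-$1$ Eisenstein series, one of the two factors is evaluated at $i/(N^2u)$; it is the weight-$1$ functional equation $e_{a,b}(-1/(N^2\tau))=N^2\tau\,\wt e_{a,b}(\tau)$ (Lemma~\ref{lem2}) that converts it back to the argument $iu$ at the cost of a factor proportional to $u$. That factor is precisely what upgrades $\int_0^\infty e^{-2\pi\nu u}\,\d u=1/(2\pi\nu)$ to $\int_0^\infty e^{-2\pi\nu u}\,u\,\d u=1/(2\pi\nu)^2$, producing $L(f,2)$ rather than an $L$-value at $s=1$. (The residual constant terms, which you rightly flag as matching the subtraction of $f(i\infty)$, are handled by the separate computations of Lemmas~\ref{lem3} and~\ref{lem4}.) Without these two modular inputs your plan stalls at a sum with first-power denominators, so the central identification with $\frac1{4\pi}L(f-f(i\infty),2)$ is not established.
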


The $L$-value on the right-hand side of \eqref{mainmb} is well defined because
of subtracting the constant term
\begin{align*}
f(i\infty)
&=\frac12\biggl(\frac{1+\zeta_N^{b}}{1-\zeta_N^{b}}\,\frac{1+\zeta_N^{bc}}{1-\zeta_N^{bc}}
-\frac{1+\zeta_N^{a}}{1-\zeta_N^{a}}\,\frac{1+\zeta_N^{ac}}{1-\zeta_N^{ac}}\biggr)
\\
&=-\frac12\biggl(\cot\frac{\pi b}N\,\cot\frac{\pi bc}N-\cot\frac{\pi a}N\,\cot\frac{\pi ac}N\biggr)
\end{align*}
in the $q$-expansion $f(\tau)=f(i\infty)+\sum_{n\ge1}c_nq^n$.
Furthermore, if a linear combination
$$
f(\tau)=\sum_{(a,b,c)\in\mathcal M}\lambda_{a,b,c}f_{a,b;c}(\tau),
\qquad \lambda_{a,b,c}\in\mathbb C,
$$
happens to be a \emph{cusp} form (and this corresponds to application of Theorem~\ref{MB} to Mahler measures),
then formula \eqref{mainmb} produces the evaluation
\begin{equation*}
\sum_{(a,b,c)\in\mathcal M}\lambda_{a,b,c}\int_{c/N}^{i\infty}\eta(g_a,g_b)=\frac1{4\pi}\,L(f(\tau),2).
\end{equation*}
Note as well that the theorem allows one to integrate between any cusps $c/N$ and $d/N$ with the help of
$\int_{c/N}^{d/N}=\int_{c/N}^{i\infty}-\int_{d/N}^{i\infty}$.

Here is a sketch of the proof of Theorem~\ref{MB}; details are given in Section~\ref{s2}.
We parameterise the contour of integration by
$\tau=c/N+it$, $0<t<\infty$, and note that the M\"obius transformation
$\tau':=(c\tau-(c^2+1)/N)/(N\tau-c)$ preserves the contour: $\tau'=c/N+i/(N^2t)$.
Then the logarithms of $g_a(\tau)$ and $g_b(\tau)$, hence their real and imaginary parts\,---\,everything
we need for computing the form \eqref{eta}, can be written as explicit Eisenstein series of weight~0
in powers of $\exp(-2\pi t)$ and $\exp(-2\pi/(N^2t))$.
Finally, executing an analytical change of variable from~\cite{RZ12} (as detailed
in \cite[Section~3]{Zu13}) the integrand becomes a linear
combination of pairwise products of weight~1 Eisenstein series in powers of $\exp(-2\pi t)$
integrated against the form $t\,\d t$ along the line $0<t<\infty$.

\medskip
Applications of Theorem~\ref{MB} to Boyd's and Rodriguez-Villegas' conjectural evaluations of 2-variable Mahler measures
are discussed in Section~\ref{s3}, while Section~\ref{s4} highlights some open problems related
to 3-variable Mahler measures.

\section{Proof of the Mellit--Brunault formula}
\label{s2}

The two auxiliary lemmas indicate particular modular transformations of the modular functions \eqref{mb01}
and the Eisenstein series~\eqref{eab}. Lemma~\ref{lem1} also describes the asymptotic behaviour
of the modular functions \eqref{mb01} in a neighbourhood of a cusp with $\Re\tau=0$; it is used
in the form~\eqref{cuspass} to determine the integration contours \eqref{gamm} for our applications
in Section~\ref{s3}.

\begin{lemma}
\label{lem1}
For $a,c$ integers,
\begin{align*}
\log g_a(c/N+it)
&=\pi icB(a/N)-\pi t\,NB(a/N)
\\ &\qquad
-\sum_{\substack{m,n\ge1\\n\equiv a}}\frac{\zeta_N^{acm}}m\,\exp(-2\pi mnt)
-\sum_{\substack{m,n\ge1\\n\equiv-a}}\frac{\zeta_N^{-acm}}m\,\exp(-2\pi mnt)
\\
&=-\frac{\pi i}2+\pi ia(c^2+1)(N-ac)+\pi icB(ac/N)-\frac{\pi B(ac/N)}{Nt}
\\ &\qquad
-\sum_{\substack{m,n\ge1\\n\equiv ac}}\frac{\zeta_N^{-am}}m\,\exp\biggl(-\frac{2\pi mn}{N^2t}\biggr)
-\sum_{\substack{m,n\ge1\\n\equiv-ac}}\frac{\zeta_N^{am}}m\,\exp\biggl(-\frac{2\pi mn}{N^2t}\biggr),
\end{align*}
where $t>0$.
\end{lemma}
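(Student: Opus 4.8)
The plan is to prove the two displayed expansions separately. The first is a straightforward $q$-expansion about the cusp $i\infty$ in the variable $\tau=c/N+it$. I would substitute $\tau=c/N+it$ into
\[
\log g_a(\tau)=\tfrac12NB(a/N)\log q+\sum_{\substack{n\ge1\\n\equiv a}}\log(1-q^n)+\sum_{\substack{n\ge1\\n\equiv-a}}\log(1-q^n),
\]
use $\log q=2\pi i\tau=2\pi ic/N-2\pi t$ so that the prefactor contributes exactly $\pi icB(a/N)-\pi tNB(a/N)$, and expand $\log(1-q^n)=-\sum_{m\ge1}q^{nm}/m$. Writing $q^{nm}=\exp(2\pi imnc/N)\exp(-2\pi mnt)$ and noting that $\exp(2\pi imnc/N)=\zeta_N^{mnc}=\zeta_N^{acm}$ whenever $n\equiv a\pmod N$ (and $\zeta_N^{-acm}$ when $n\equiv-a$) recovers the two Lambert sums. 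This step is routine bookkeeping.

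The substance is in the second expansion, which I would obtain by transporting the first to $i\infty$ through the M\"obius map $\gamma=\bigl(\begin{smallmatrix}c&-(c^2+1)/N\\N&-c\end{smallmatrix}\bigr)$, of determinant~$1$, that sends the cusp $c/N$ to $i\infty$ and already appears in the proof sketch. The key input is that $g_a$ is, up to a normalising root of unity, the Siegel unit of characteristic $(a/N,0)$ evaluated at $N\tau$. Conjugating by $S=\operatorname{diag}(N,1)$ turns $\gamma$ into the integral matrix $M=S\gamma^{-1}S^{-1}=\bigl(\begin{smallmatrix}-c&c^2+1\\-1&c\end{smallmatrix}\bigr)\in SL_2(\mathbb Z)$, so the classical transformation law for Siegel units writes $g_a(\tau)=g_a(\gamma^{-1}\tau')$ as a root of unity times the Siegel unit of the transformed characteristic $(a/N,0)M=(-ac/N,\,a(c^2+1)/N)$, now evaluated at $N\tau'=c+i/(Nt)$. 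Running the same Lambert expansion as before on the $q$-product of this transformed unit produces the series in $\exp(-2\pi/(N^2t))$ with residue conditions $n\equiv\pm ac$ and coefficients $\zeta_N^{\mp am}$, while its leading $q$-power $\tfrac12B(ac/N)\cdot2\pi iN\tau'=\pi icB(ac/N)-\pi B(ac/N)/(Nt)$ supplies at once the term in $1/(Nt)$ and part of the constant.

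The hard part will be pinning down the remaining purely imaginary constant $-\tfrac{\pi i}2+\pi ia(c^2+1)(N-ac)$ exactly, since this forces one to track every root of unity: the overall sign in the Siegel normalisation, the automorphy factor $\epsilon(M)$ (an eta-type multiplier controlled by Dedekind sums), and, most delicately, the phase incurred when the characteristic $(-ac/N,\,a(c^2+1)/N)$ is reduced to the fundamental box $[0,1)^2$ in which the product expansion is valid. Assembling these and simplifying through the quasi-periodicity and reciprocity of $B_2$ should collapse to the stated integer combination $a(c^2+1)(N-ac)$ together with $-\pi i/2$. As an independent verification I would redo the transformation analytically, expressing the Lambert sum as a Mellin--Barnes integral of a product of Lerch zeta functions and invoking their functional equation under $t\mapsto1/(N^2t)$ (the ``analytical change of variable'' of \cite{RZ12,Zu13}): there the elementary terms emerge as residues and the constant as the contribution of the relevant poles, which is the cleanest way to confirm the delicate phase.
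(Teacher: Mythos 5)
Your first expansion is exactly the paper's computation, and your route to the second is in essence the same as the paper's: transport the cusp $c/N$ to $i\infty$ by the M\"obius map $\tau\mapsto(c\tau-(c^2+1)/N)/(N\tau-c)$ and invoke the transformation law of the $g_a$ viewed as Siegel units / generalized Dedekind eta functions, then re-expand the transformed product. Your bookkeeping of the transformed characteristic $(-ac/N,\,a(c^2+1)/N)$, of the residue classes $n\equiv\pm ac$, and of the leading term $\pi icB(ac/N)-\pi B(ac/N)/(Nt)$ all matches what the paper obtains.

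The gap is the one you yourself flag: the purely imaginary constant $-\tfrac{\pi i}2+\pi ia(c^2+1)(N-ac)$ is never actually established. You list the three sources of phase (Siegel normalisation, the multiplier $\epsilon(M)$ via Dedekind sums, and the reduction of the characteristic to the fundamental box) and assert that they ``should collapse'' to the stated value; but this is precisely the only nontrivial content of the second expansion, and as written nothing certifies the coefficient $a(c^2+1)(N-ac)$ or the $-\pi i/2$. The paper avoids this entire computation by quoting Yang's Theorem~1 (\cite{Ya04}) for the specific matrix $\bigl(\begin{smallmatrix} c & -c^2-1 \\ 1 & -c \end{smallmatrix}\bigr)$ with $h=0$: that theorem delivers the multiplier in closed form as $\exp(-\pi i/2+\pi ia(c^2+1)(N-ac))$, after which only the trivial simplification $\zeta_N^{-a(c^2+1)m+cmn}=\zeta_N^{\mp am}$ on $n\equiv\pm ac$ remains. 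So either carry out the Dedekind-sum/characteristic-reduction computation explicitly, or cite Yang's explicit formula (which packages exactly that computation); your proposed Mellin--Barnes verification is a reasonable independent check but is likewise only sketched and does not by itself close the gap.
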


\begin{proof}
First note that definition~\eqref{mb01} implies
\begin{align*}
\log g_a(\tau)
&=\pi i\tau\,NB(a/N)
+\sum_{\substack{n\ge1\\n\equiv a}}\log(1-q^n)
+\sum_{\substack{n\ge1\\n\equiv-a}}\log(1-q^n)
\\
&=\pi i\tau\,NB(a/N)
-\sum_{\substack{m,n\ge1\\n\equiv a}}\frac{q^{mn}}m
-\sum_{\substack{m,n\ge1\\n\equiv-a}}\frac{q^{mn}}m.
\end{align*}
Therefore, the substitution $\tau=c/N+it$, equivalently $q=\zeta_N^c\exp(-2\pi t)$, results in the first expansion of the lemma.

Secondly, the modular units \eqref{mb01} are particular cases of the `generalized Dedekind eta functions' \cite[eq.~(3)]{Ya04}.
Applying \cite[Theorem~1]{Ya04} with the choice $h=0$ and
$\gamma=\bigl(\begin{smallmatrix} c & -c^2-1 \\ 1 & -c \end{smallmatrix}\bigr)$ we deduce that
$$
g_a(\tau)=\wt g_{a,c}\biggl(\frac{c\tau-(c^2+1)/N}{N\tau-c}\biggr),
$$
where
\begin{multline*}
\wt g_{a,c}(\tau):=\exp(-\pi i/2+\pi ia(c^2+1)(N-ac))\,q^{NB(ac/N)/2}
\\ \times
\prod_{\substack{n\ge1\\n\equiv ac\bmod N}}(1-\zeta_N^{-a(c^2+1)}q^n)\prod_{\substack{n\ge1\\n\equiv-ac\bmod N}}(1-\zeta_N^{a(c^2+1)}q^n).
\end{multline*}
On the other hand,
$$
\tau':=\frac{c\tau-(c^2+1)/N}{N\tau-c}\bigg|_{\tau=c/N+it}
=\frac cN+\frac i{N^2t},
$$
so that
\begin{multline*}
\log\wt g_{a,c}(\tau')
=-\frac{\pi i}2+\pi ia(c^2+1)(N-ac)+\pi icB(ac/N)-\frac{\pi B(ac/N)}{Nt}
\\
-\sum_{\substack{m,n\ge1\\n\equiv ac}}\frac{\zeta_N^{-a(c^2+1)m+cmn}}m\,\exp\biggl(-\frac{2\pi mn}{N^2t}\biggr)
-\sum_{\substack{m,n\ge1\\n\equiv-ac}}\frac{\zeta_N^{a(c^2+1)m+cmn}}m\,\exp\biggl(-\frac{2\pi mn}{N^2t}\biggr),
\end{multline*}
and it remains to use the congruences $n\equiv ac$ and $n\equiv-ac$ to simplify the exponents of the roots of unity.
\end{proof}

\begin{lemma}
\label{lem2}
For $a,b$ integers not divisible by~$N$,
$$
\frac1{N^2\tau}\,e_{a,b}\biggl(-\frac1{N^2\tau}\biggr)
=\wt e_{a,b}(\tau):=\sum_{\substack{m,n\ge1\\m\equiv a,\ n\equiv b}}q^{mn}
-\sum_{\substack{m,n\ge1\\m\equiv-a,\ n\equiv-b}}q^{mn}.
$$
\end{lemma}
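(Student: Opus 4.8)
The plan is to establish the transformation $\frac{1}{N^2\tau}\,e_{a,b}(-1/(N^2\tau))=\wt e_{a,b}(\tau)$ by recognising $e_{a,b}$ as a weight~1 Eisenstein series whose behaviour under $\tau\mapsto-1/(N^2\tau)$ is governed by a two-dimensional lattice (Poisson) summation. First I would rewrite the defining series \eqref{eab} as a genuine double sum over $\mathbb Z^2$. The double-sum tail $\sum_{m,n\ge1}(\zeta_N^{am+bn}-\zeta_N^{-(am+bn)})q^{mn}$, together with the two cotangent-type constant terms, should be repackaged into a Kronecker--Eisenstein series of the shape $\sum_{(m,n)}{}'\,\frac{\chi(m,n)}{m+n\tau}$ (suitably regularised), where the numerator encodes the characters $\zeta_N^{am+bn}$; the identity $\frac12\cdot\frac{1+\zeta_N^a}{1-\zeta_N^a}=\frac12+\sum_{k\ge1}\zeta_N^{ak}$ is what allows the constant term to be absorbed into the $n=0$ (or $m=0$) row of the lattice sum.

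Next I would apply the classical modular transformation of such a weight~1 Kronecker--Eisenstein series under $\tau\mapsto-1/(N^2\tau)$. The weight~1 automorphy factor is exactly the prefactor $N^2\tau$ on the left-hand side, so after multiplying through by $1/(N^2\tau)$ the transformed series is again a lattice sum, but now with the roles of $m$ and $n$ interchanged and the characters in the two variables swapped accordingly (this is the Fourier-dual character that Poisson summation produces). I would then re-expand the transformed lattice sum back into a $q$-series. The effect of the variable swap is that the summation conditions become congruences on \emph{both} $m$ and $n$ simultaneously, namely $m\equiv a,\ n\equiv b$ in the first sum and $m\equiv-a,\ n\equiv-b$ in the second, and the character values collapse to $\pm1$; this is precisely the right-hand side $\wt e_{a,b}(\tau)=\sum_{m\equiv a,\,n\equiv b}q^{mn}-\sum_{m\equiv-a,\,n\equiv-b}q^{mn}$.

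The main obstacle I anticipate is the regularisation and bookkeeping in the Poisson/Kronecker step: weight~1 Eisenstein series are only conditionally convergent, so the interchange $\tau\mapsto-1/(N^2\tau)$ must be carried out via Hecke's analytic-continuation (the $e^{-s}$-regularisation in $\sum'(m+n\tau)^{-1}|m+n\tau|^{-2s}$ at $s=0$), and one must track how the boundary terms at $m=0$ and $n=0$ feed into the constant terms on each side. The hypothesis that $a,b$ are not divisible by~$N$ is exactly what guarantees $\zeta_N^a,\zeta_N^b\neq1$, so no genuinely singular $(m,n)=(0,0)$ term appears and the two constant terms on the left are finite; this should make the regularisation clean. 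Once the lattice transformation is pinned down, converting both sides back into the stated $q$-expansions is a routine rearrangement of geometric series, so I would keep that part brief and concentrate the write-up on justifying the transformation law and the character swap.
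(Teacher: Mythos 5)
Your plan is sound and would work, but it takes a different (more self-contained, more laborious) route than the paper. The paper does not redo the analytic continuation at all: it identifies $e_{a,b}$, up to the substitution $\tau\mapsto N\tau$ and a constant factor, with a finite Fourier transform $\wh G_{a,b}=\sum_{c=0}^{N-1}\zeta_N^{bc}G_{a,c}$ of Schoeneberg's weight-1 Eisenstein series $G_{a,c}$, quotes Schoeneberg's transformation law $G_{a,c}(\gamma\tau)=(C\tau+D)G_{aD+cB,\,aC+cA}(\tau)$ for $\gamma=\bigl(\begin{smallmatrix}0&-1\\1&0\end{smallmatrix}\bigr)$, and then observes that the finite character sum $\sum_{c=0}^{N-1}\zeta_N^{bc}\zeta_N^{\mp cm}$ is $N$ or $0$ according as $m\equiv\pm b\pmod N$ --- this is exactly the mechanism by which your ``Fourier-dual character'' turns a twist by $\zeta_N^{bm}$ into the congruence condition $m\equiv b$, so the combinatorial heart of your argument and the paper's coincide. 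What differs is where the analytic burden sits: you propose to build the Kronecker--Eisenstein lattice sum yourself and push it through Hecke's $s$-regularisation, which is precisely the content of the cited transformation formula. That step is the only delicate one in your write-up and you have correctly located it, but be aware it is not merely a matter of checking that no singular $(0,0)$ term appears: for weight $1$ the Hecke-regularised series can acquire a non-holomorphic correction at $s=0$, and you must verify that this anomaly vanishes for the characters at hand (the hypothesis $N\nmid a$, $N\nmid b$ is what saves you, but it has to be used there, not only to keep the cotangent constants finite). You would also need to justify the Eisenstein-summation identity $\frac12\frac{1+\zeta_N^a}{1-\zeta_N^a}=\frac12+\sum_{k\ge1}\zeta_N^{ak}$ as an Abel limit when absorbing the constant terms into the $n=0$ row. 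In short: your approach buys independence from Schoeneberg and Katz at the price of carrying out the regularisation honestly; the paper buys a half-page proof at the price of two citations. Either is acceptable, but as written your proposal is an outline whose hardest step remains to be executed.
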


\begin{proof}
In \cite[Section 7]{Sch74} the following general Eisenstein series of weight 1 and level $N$ are introduced:
$$
G_{a,c}(\tau)
=G_{N,1;(c,a)}(\tau)
:=-\frac{2\pi i}N\Biggl(\kappa_{a,c}+\sum_{\substack{m,n\ge1\\n\equiv c\bmod N}}\zeta_N^{am}q^{mn/N}
-\sum_{\substack{m,n\ge1\\n\equiv-c\bmod N}}\zeta_N^{-am}q^{mn/N}\Biggr),
$$
where
$$
\kappa_{a,c}:=\begin{cases}
\dfrac12\,\dfrac{1+\zeta_N^a}{1-\zeta_N^a} &\text{if $c\equiv0\bmod N$}, \\[1.5mm]
\dfrac12-\biggl\{\dfrac cN\biggr\} &\text{if $c\not\equiv0\bmod N$}.
\end{cases}
$$
Then for $\gamma=\bigl(\begin{smallmatrix} A&B\\C&D\end{smallmatrix}\bigr)\in SL_2(\mathbb Z)$ we have
\begin{equation}
G_{a,c}(\gamma\tau)
=(C\tau+D)G_{aD+cB,aC+cA}(\tau).
\label{tr1a}
\end{equation}

The partial Fourier transform from \cite[Chapter~III]{Ka76} applied to $G_{a,c}$ results in
\begin{align*}
\wh G_{a,b}(\tau)
:=\sum_{c=0}^{N-1}\zeta_N^{bc}\,G_{a,c}(\tau)
&=-\frac{\pi i}N\biggl(\frac{1+\zeta_N^a}{1-\zeta_N^a}+\frac{1+\zeta_N^b}{1-\zeta_N^b}\biggr)
\\ &\quad
-\frac{2\pi i}N\sum_{m,n\ge1}(\zeta_N^{am+bn}-\zeta_N^{-(am+bn)})q^{mn/N}.
\end{align*}
On the other hand, taking $\gamma=\bigl(\begin{smallmatrix} 0&-1\\1&0\end{smallmatrix}\bigr)$ in~\eqref{tr1a} we find that
\begin{align*}
\tau^{-1}\wh G_{a,b}(-1/\tau)
&=\sum_{c=0}^{N-1}\zeta_N^{bc}\,G_{-c,a}(\tau)
\\
&=-\frac{2\pi i}N\sum_{c=0}^{N-1}\zeta_N^{bc}\Biggl(\frac12-\biggl\{\frac aN\biggr\}+\sum_{\substack{m,n\ge1\\n\equiv a}}\zeta_N^{-cm}q^{mn/N}
-\sum_{\substack{m,n\ge1\\n\equiv-a}}\zeta_N^{cm}q^{mn/N}\Biggr)
\\
&=-2\pi i\Biggl(\sum_{\substack{m,n\ge1\\n\equiv a,\ m\equiv b}}q^{mn/N}
-\sum_{\substack{m,n\ge1\\n\equiv-a,\ m\equiv-b}}q^{mn/N}\Biggr).
\end{align*}
Using now $\wh G_{a,b}(N\tau)=-2\pi i\,e_{a,b}(\tau)/N$ we obtain the desired transformation.
\end{proof}

The next two statements are to take care of integrating the constant terms of auxiliary Eisenstein series.

\begin{lemma}
\label{lem3}
For $a,b$ integers not divisible by $N$,
$$
\int_0^\infty\biggl(e_{a,b}(it)+e_{a,-b}(it)-\frac{1+\zeta_N^a}{1-\zeta_N^a}\biggr)\,t\,\d t
=i\Cl_2\biggl(\frac{2\pi a}N\biggr)B\biggl(\frac bN\biggr),
$$
where
$$
\Cl_2(x):=\sum_{m\ge1}\frac{\sin mx}{m^2}
$$
denotes Clausen's \textup(dilogarithmic\textup) function.
\end{lemma}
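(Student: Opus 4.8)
The plan is to strip the integrand of its constant term, reduce what remains to an ordinary $q$-series, and then integrate term by term. First I would add the two defining expansions \eqref{eab} of $e_{a,b}$ and $e_{a,-b}$. Their constant terms differ only in the $b$-summand, and since $\tfrac{1+\zeta_N^{-b}}{1-\zeta_N^{-b}}=-\tfrac{1+\zeta_N^{b}}{1-\zeta_N^{b}}$ those two contributions cancel, leaving $\tfrac{1+\zeta_N^a}{1-\zeta_N^a}$ as the constant term of $e_{a,b}+e_{a,-b}$. Hence the bracket in the lemma is a pure $q$-series, whose $(m,n)$-coefficient, after grouping, is $(\zeta_N^{am}-\zeta_N^{-am})(\zeta_N^{bn}+\zeta_N^{-bn})=4i\sin\tfrac{2\pi am}N\cos\tfrac{2\pi bn}N$; that is,
$$
e_{a,b}(\tau)+e_{a,-b}(\tau)-\frac{1+\zeta_N^a}{1-\zeta_N^a}
=4i\sum_{m,n\ge1}\sin\frac{2\pi am}N\,\cos\frac{2\pi bn}N\,q^{mn}.
$$

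Next I would set $\tau=it$, so that $q^{mn}=\exp(-2\pi mnt)$, and integrate against $t\,\d t$. Since $\int_0^\infty\exp(-2\pi mnt)\,t\,\d t=(2\pi mn)^{-2}$ while the coefficients are bounded by~$4$ and $\sum_{m,n\ge1}(mn)^{-2}<\infty$, Tonelli's theorem justifies exchanging summation and integration. The resulting double sum factorises into a product of single sums,
$$
\int_0^\infty\biggl(e_{a,b}(it)+e_{a,-b}(it)-\frac{1+\zeta_N^a}{1-\zeta_N^a}\biggr)t\,\d t
=\frac i{\pi^2}\biggl(\sum_{m\ge1}\frac{\sin(2\pi am/N)}{m^2}\biggr)\biggl(\sum_{n\ge1}\frac{\cos(2\pi bn/N)}{n^2}\biggr).
$$

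It then remains to identify the two factors. The first is $\Cl_2(2\pi a/N)$ directly from the definition of Clausen's function. The second is the classical Fourier expansion of the second Bernoulli polynomial, namely $\sum_{n\ge1}\cos(2\pi xn)/n^2=\pi^2B(x)$ (readily verified at $x=0$ and $x=\tfrac12$ against the values $\pi^2/6$ and $-\pi^2/12$), evaluated at $x=b/N$. Substituting, the factor $\pi^2$ cancels and the integral collapses to $i\,\Cl_2(2\pi a/N)B(b/N)$, as claimed. I expect the only delicate points to be this last Fourier identity together with correct bookkeeping of the constant $\pi^2$; the interchange of $\sum$ and $\int$ is harmless by the absolute convergence just noted, and the endpoints cause no trouble: as $t\to\infty$ the bracket decays exponentially, while as $t\to0$ Lemma~\ref{lem2} forces $e_{a,\pm b}(it)\to0$, so that the bracket approaches the constant $-\tfrac{1+\zeta_N^a}{1-\zeta_N^a}$, harmless once weighted by~$t\,\d t$.
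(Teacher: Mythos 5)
Your proposal is correct and follows essentially the same route as the paper: cancel the constant term, factor the $(m,n)$-coefficient as $(\zeta_N^{am}-\zeta_N^{-am})(\zeta_N^{bn}+\zeta_N^{-bn})$, integrate termwise via the Mellin transform \eqref{mellin} at $s=2$, and finish with the Fourier expansion $\sum_{n\ge1}\cos(nx)/n^2=\pi^2B(x/(2\pi))$. Your explicit justification of the interchange of summation and integration is a slight refinement over the paper's proof, which leaves that step implicit.
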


\begin{proof}
The integral under consideration is equal to
\begin{align*}
&
\int_0^\infty\sum_{m,n\ge1}(\zeta_N^{am+bn}-\zeta_N^{-(am+bn)}+\zeta_N^{am-bn}-\zeta_N^{-(am-bn)})\exp(-2\pi mnt)\,t\,\d t
\\ &\quad
=\int_0^\infty\sum_{m,n\ge1}(\zeta_N^{am}-\zeta_N^{-am})(\zeta_N^{bn}+\zeta_N^{-bn})\exp(-2\pi mnt)\,t\,\d t.
\end{align*}
On using the Mellin transform
\begin{equation}
\int_0^\infty\exp(-2\pi kt)t^{s-1}\,\d t
=\frac{\Gamma(s)}{(2\pi)^sk^s}
\qquad\text{for}\quad \Re s>0,
\label{mellin}
\end{equation}
the integral of the double sum evaluates to
\begin{align*}
\frac1{4\pi^2}\sum_{m\ge1}\frac{\zeta_N^{am}-\zeta_N^{-am}}{m^2}\sum_{n\ge1}\frac{\zeta_N^{bn}+\zeta_N^{-bn}}{n^2}
=\frac i{\pi^2}\,\Cl_2\biggl(\frac{2\pi a}N\biggr)\sum_{n\ge1}\frac{\cos(2\pi nb/N)}{n^2}.
\end{align*}
It remains to use
$$
\sum_{n\ge1}\frac{\cos nx}{n^2}=\pi^2B\biggl(\frac x{2\pi}\biggr),
$$
and the required evaluation follows.
\end{proof}

\begin{lemma}
\label{lem4}
For $a,b$ integers not divisible by $N$,
\begin{align*}
&
\int_0^\infty\frac1{iNt}\,\d\sum_{m\ge1}\frac{\zeta_N^{am}-\zeta_N^{-am}}m
\Biggl(\sum_{\substack{n\ge1\\n\equiv b}}-\sum_{\substack{n\ge1\\n\equiv-b}}\Biggr)\exp\biggl(-\frac{2\pi mn}{N^2t}\biggr)
\\ &\quad
=-i\,\Cl_2\biggl(\frac{2\pi a}N\biggr)\,\frac{1+\zeta_N^b}{1-\zeta_N^b}.
\end{align*}
\end{lemma}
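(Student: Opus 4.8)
The plan is to integrate by parts so as to strip off the factor $1/(iNt)$, to evaluate the resulting integrals through the Mellin transform \eqref{mellin}, and to recognise the two surviving one-dimensional sums as Clausen's function and a cotangent. Write $G(t)$ for the double sum that is being differentiated, so that the left-hand side is $\int_0^\infty(iNt)^{-1}\,\d G(t)$. Since $\d\bigl((iNt)^{-1}\bigr)=-(iNt^2)^{-1}\,\d t$, integration by parts gives
\begin{equation*}
\int_0^\infty\frac1{iNt}\,\d G(t)
=\biggl[\frac{G(t)}{iNt}\biggr]_0^\infty+\int_0^\infty\frac{G(t)}{iNt^2}\,\d t.
\end{equation*}
First I would check that the boundary term vanishes. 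As $t\to0^+$ every exponential $\exp(-2\pi mn/(N^2t))$ decays faster than any power of~$t$, so $G(t)\to0$ and $G(t)/(iNt)\to0$. As $t\to\infty$ the inner sums over $n\equiv b$ and over $n\equiv-b$ each grow linearly in~$t$, but their difference remains bounded, so $G(t)$ tends to a finite limit and again $G(t)/(iNt)\to0$.

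Next I would interchange summation and integration and apply \eqref{mellin}. Substituting $u=1/t$ turns $\int_0^\infty t^{-2}\exp(-2\pi mn/(N^2t))\,\d t$ into $\int_0^\infty\exp(-2\pi mnu/N^2)\,\d u=N^2/(2\pi mn)$, the case $s=1$ of \eqref{mellin}. Carrying the factor $(\zeta_N^{am}-\zeta_N^{-am})/m$ along, this collapses the whole expression to
\begin{equation*}
\frac N{2\pi i}\sum_{m\ge1}\frac{\zeta_N^{am}-\zeta_N^{-am}}{m^2}
\Biggl(\sum_{\substack{n\ge1\\n\equiv b}}-\sum_{\substack{n\ge1\\n\equiv-b}}\Biggr)\frac1n,
\end{equation*}
a product of a sum over~$m$ and a sum over~$n$.

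It then remains to evaluate the two factors. The sum over~$m$ is $2i\,\Cl_2(2\pi a/N)$ straight from the definition of Clausen's function. For the sum over~$n$ I would use the partial-fraction expansion of $\pi\cot$ (equivalently the reflection formula for the digamma function, or Abel summation of $\sum_{m\ge1}\sin(m\theta)/m$) to obtain
\begin{equation*}
\Biggl(\sum_{\substack{n\ge1\\n\equiv b}}-\sum_{\substack{n\ge1\\n\equiv-b}}\Biggr)\frac1n=\frac\pi N\cot\frac{\pi b}N.
\end{equation*}
Substituting both evaluations, and using $\tfrac{1+\zeta_N^b}{1-\zeta_N^b}=i\cot\tfrac{\pi b}N$, then matches the claimed right-hand side, since the powers of~$N$ and of~$2\pi i$ cancel.

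The main obstacle I expect is the conditional convergence of the $n$-sum. After the Mellin transform each term contributes~$1/n$, which is not absolutely summable, so the interchange of $\sum_n$ with $\int_0^\infty$ cannot be justified term by term; one must keep the $n\equiv b$ and $n\equiv-b$ contributions paired (or, equivalently, retain the exponential regulator until the very end and invoke Abel's theorem) to make every step legitimate. The closed-form evaluation of this $n$-sum as a cotangent is the one place where a genuinely new one-dimensional identity, rather than the absolutely convergent manipulations used in Lemma~\ref{lem3}, is required.
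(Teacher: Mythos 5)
Your proof is correct and follows essentially the same route as the paper's: both reduce the integral to $\frac{N}{2\pi i}\sum_{m\ge1}\frac{\zeta_N^{am}-\zeta_N^{-am}}{m^2}\cdot\bigl(\sum_{n\equiv b}-\sum_{n\equiv-b}\bigr)\frac1n$ and identify the two factors as $2i\Cl_2(2\pi a/N)$ and $\frac{\pi}{N}\cot(\pi b/N)$. The only differences are cosmetic: the paper substitutes $u=1/(N^2t)$ directly and applies \eqref{mellin} with $s\to2^+$, evaluating the $n$-sum as $\lim_{s\to1^+}$ via the digamma reflection formula, whereas you integrate by parts first (which obliges you to check the vanishing of the boundary terms, correctly done) and evaluate the $n$-sum by the partial-fraction expansion of the cotangent --- an equivalent form of the same identity.
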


\begin{proof}
Performing the change of variable $u=1/(N^2t)$ in the integral, it becomes equal to
$$
\frac{2\pi N}i\int_0^\infty\sum_{m\ge1}(\zeta_N^{am}-\zeta_N^{-am})
\Biggl(\sum_{\substack{n\ge1\\n\equiv b}}-\sum_{\substack{n\ge1\\n\equiv-b}}\Biggr)n\exp(-2\pi mnu)\,u\,\d u,
$$
and applying \eqref{mellin} with $s\to2^+$ it evaluates to
\begin{align*}
&
\frac N{\pi}\sum_{m\ge1}\frac{\sin(2\pi am/N)}{m^2}
\lim_{s\to1^+}\Biggl(\sum_{\substack{n\ge1\\n\equiv b}}-\sum_{\substack{n\ge1\\n\equiv-b}}\Biggr)\frac1{n^s}
\\ &\quad
=\frac1{\pi}\,\Cl_2\biggl(\frac{2\pi a}N\biggr)\cdot\bigl(\psi(1-\{b/N\})-\psi(\{b/N\})\bigr)
=\frac1{\pi}\,\Cl_2\biggl(\frac{2\pi a}N\biggr)\,\pi\cot\frac{\pi b}N,
\end{align*}
where $\psi(x)$ is the logarithmic derivative of the gamma function. It remains to use
$\cot(\pi b/N)=-i(1+\zeta_N^b)/(1-\zeta_N^b)$.
\end{proof}

\begin{proof}[Proof of Theorem~\textup{\ref{MB}}]
To integrate the 1-form $\eta(g_a,g_b)$ along the interval $\tau\in(c/N,i\infty)$ we make the substitution $\tau=c/N+it$, $0<t<\infty$.
It follows from Lemma~\ref{lem1} that
\begin{align}
\log|g_a(\tau)|
&=-\frac{\pi B(ac/N)}{Nt}
-\frac12\sum_{m\ge1}\frac{\zeta_N^{am}+\zeta_N^{-am}}m
\Biggl(\sum_{\substack{n\ge1\\n\equiv ac}}+\sum_{\substack{n\ge1\\n\equiv-ac}}\Biggr)\exp\biggl(-\frac{2\pi mn}{N^2t}\biggr)
\label{cuspass}
\\ \intertext{and}
\d\arg g_a(\tau)
&=-\frac1{2i}\,\d\sum_{m\ge1}\frac{\zeta_N^{acm}-\zeta_N^{-acm}}m
\Biggl(\sum_{\substack{n\ge1\\n\equiv a}}-\sum_{\substack{n\ge1\\n\equiv-a}}\Biggr)\exp(-2\pi mnt)
\nonumber\\
&=\frac1{2i}\,\d\sum_{m\ge1}\frac{\zeta_N^{am}-\zeta_N^{-am}}m
\Biggl(\sum_{\substack{n\ge1\\n\equiv ac}}-\sum_{\substack{n\ge1\\n\equiv-ac}}\Biggr)\exp\biggl(-\frac{2\pi mn}{N^2t}\biggr).
\nonumber
\end{align}
This computation implies
\begin{align*}
\eta(g_a,g_b)
&=-\frac{\pi B(ac/N)}{2iNt}\,\d\sum_{m\ge1}\frac{\zeta_N^{bm}-\zeta_N^{-bm}}m
\Biggl(\sum_{\substack{n\ge1\\n\equiv bc}}-\sum_{\substack{n\ge1\\n\equiv-bc}}\Biggr)\exp\biggl(-\frac{2\pi mn}{N^2t}\biggr)
\\ &\qquad
+\frac1{4i}\sum_{m_1\ge1}\frac{\zeta_N^{am_1}+\zeta_N^{-am_1}}{m_1}
\Biggl(\sum_{\substack{n_1\ge1\\n_1\equiv ac}}+\sum_{\substack{n_1\ge1\\n_1\equiv-ac}}\Biggr)\exp\biggl(-\frac{2\pi m_1n_1}{N^2t}\biggr)
\\ &\qquad\quad\times
\d\sum_{m_2\ge1}\frac{\zeta_N^{bcm_2}-\zeta_N^{-bcm_2}}{m_2}
\Biggl(\sum_{\substack{n_2\ge1\\n_2\equiv b}}-\sum_{\substack{n_2\ge1\\n_2\equiv-b}}\Biggr)\exp(-2\pi m_2n_2t)
\displaybreak[2]\\ &\;
+\frac{\pi B(bc/N)}{2iNt}\,\d\sum_{m\ge1}\frac{\zeta_N^{am}-\zeta_N^{-am}}m
\Biggl(\sum_{\substack{n\ge1\\n\equiv ac}}-\sum_{\substack{n\ge1\\n\equiv-ac}}\Biggr)\exp\biggl(-\frac{2\pi mn}{N^2t}\biggr)
\\ &\;\qquad
-\frac1{4i}\sum_{m_1\ge1}\frac{\zeta_N^{bm_1}+\zeta_N^{-bm_1}}{m_1}
\Biggl(\sum_{\substack{n_1\ge1\\n_1\equiv bc}}+\sum_{\substack{n_1\ge1\\n_1\equiv-bc}}\Biggr)\exp\biggl(-\frac{2\pi m_1n_1}{N^2t}\biggr)
\\ &\;\qquad\quad\times
\d\sum_{m_2\ge1}\frac{\zeta_N^{acm_2}-\zeta_N^{-acm_2}}{m_2}
\Biggl(\sum_{\substack{n_2\ge1\\n_2\equiv a}}-\sum_{\substack{n_2\ge1\\n_2\equiv-a}}\Biggr)\exp(-2\pi m_2n_2t).
\end{align*}
The terms involving double sums only can be integrated with the help of Lemma~\ref{lem4}, and we obtain
\begin{align*}
&
\int_{c/N}^{i\infty}\eta(g_a,g_b)
=\frac{\pi i}2\,\frac{1+\zeta_N^{bc}}{1-\zeta_N^{bc}}\,\Cl_2\biggl(\frac{2\pi b}N\biggr)B\biggl(\frac{ac}N\biggr)
-\frac{\pi i}2\,\frac{1+\zeta_N^{ac}}{1-\zeta_N^{ac}}\,\Cl_2\biggl(\frac{2\pi a}N\biggr)B\biggl(\frac{bc}N\biggr)
\\ &\quad
-\frac{\pi}{2i}\Biggl(\sum_{m_1,m_2\ge1}(\zeta_N^{am_1}+\zeta_N^{-am_1})(\zeta_N^{bcm_2}-\zeta_N^{-bcm_2})
\Biggl(\sum_{\substack{n_1\ge1\\n_1\equiv ac}}+\sum_{\substack{n_1\ge1\\n_1\equiv-ac}}\Biggr)
\Biggl(\sum_{\substack{n_2\ge1\\n_2\equiv b}}-\sum_{\substack{n_2\ge1\\n_2\equiv-b}}\Biggr)
\\ &\quad\quad
-\sum_{m_1,m_2\ge1}(\zeta_N^{bm_1}+\zeta_N^{-bm_1})(\zeta_N^{acm_2}-\zeta_N^{-acm_2})
\Biggl(\sum_{\substack{n_1\ge1\\n_1\equiv bc}}+\sum_{\substack{n_1\ge1\\n_1\equiv-bc}}\Biggr)
\Biggl(\sum_{\substack{n_2\ge1\\n_2\equiv a}}-\sum_{\substack{n_2\ge1\\n_2\equiv-a}}\Biggr)\Biggr)
\\ &\quad\qquad\times
\frac{n_2}{m_1}\int_0^\infty\exp\biggl(-2\pi\biggl(\frac{m_1n_1}{N^2t}+m_2n_2t\biggr)\biggr)\,\d t.
\end{align*}
Now we execute the change of variable $u=n_2t/m_1$, interchange integration and quadruple summation
and use Lemma~\ref{lem2}:
\begin{align*}
&
\int_{c/N}^{i\infty}\eta(g_a,g_b)
=\frac{\pi i}2\,\frac{1+\zeta_N^{bc}}{1-\zeta_N^{bc}}\,\Cl_2\biggl(\frac{2\pi b}N\biggr)B\biggl(\frac{ac}N\biggr)
-\frac{\pi i}2\,\frac{1+\zeta_N^{ac}}{1-\zeta_N^{ac}}\,\Cl_2\biggl(\frac{2\pi a}N\biggr)B\biggl(\frac{bc}N\biggr)
\\ &\quad
-\frac{\pi}{2i}\int_0^\infty\sum_{m_1,m_2\ge1}(\zeta_N^{am_1}+\zeta_N^{-am_1})(\zeta_N^{bcm_2}-\zeta_N^{-bcm_2})\exp(-2\pi m_1m_2u)
\\ &\quad\qquad\times
\Biggl(\sum_{\substack{n_1\ge1\\n_1\equiv ac}}+\sum_{\substack{n_1\ge1\\n_1\equiv-ac}}\Biggr)
\Biggl(\sum_{\substack{n_2\ge1\\n_2\equiv b}}-\sum_{\substack{n_2\ge1\\n_2\equiv-b}}\Biggr)
\exp\biggl(-\frac{2\pi n_1n_2}{N^2u}\biggr)
\\ &\quad\quad
-\sum_{m_1,m_2\ge1}(\zeta_N^{bm_1}+\zeta_N^{-bm_1})(\zeta_N^{acm_2}-\zeta_N^{-acm_2})\exp(-2\pi m_1m_2u)
\\ &\quad\qquad\times
\Biggl(\sum_{\substack{n_1\ge1\\n_1\equiv bc}}+\sum_{\substack{n_1\ge1\\n_1\equiv-bc}}\Biggr)
\Biggl(\sum_{\substack{n_2\ge1\\n_2\equiv a}}-\sum_{\substack{n_2\ge1\\n_2\equiv-a}}\Biggr)
\exp\biggl(-\frac{2\pi n_1n_2}{N^2u}\biggr)\,\d u
\displaybreak[2]\\ &\;
=\frac{\pi i}2\,\frac{1+\zeta_N^{bc}}{1-\zeta_N^{bc}}\,\Cl_2\biggl(\frac{2\pi b}N\biggr)B\biggl(\frac{ac}N\biggr)
-\frac{\pi i}2\,\frac{1+\zeta_N^{ac}}{1-\zeta_N^{ac}}\,\Cl_2\biggl(\frac{2\pi a}N\biggr)B\biggl(\frac{bc}N\biggr)
\\ &\quad
-\frac{\pi}{2i}\int_0^\infty\biggl(e_{a,bc}(iu)-e_{a,-bc}(iu)-\frac{1+\zeta_N^{bc}}{1-\zeta_N^{bc}}\biggr)
\bigl(\wt e_{b,ac}(i/(N^2u))+\wt e_{b,-ac}(i/(N^2u))\bigr)
\\ &\quad\quad
-\biggl(e_{b,ac}(iu)-e_{b,-ac}(iu)-\frac{1+\zeta_N^{ac}}{1-\zeta_N^{ac}}\biggr)
\bigl(\wt e_{a,bc}(i/(N^2u))+\wt e_{a,-bc}(i/(N^2u))\bigr)\,\d u
\displaybreak[2]\\ &\;
=\frac{\pi i}2\,\frac{1+\zeta_N^{bc}}{1-\zeta_N^{bc}}\,\Cl_2\biggl(\frac{2\pi b}N\biggr)B\biggl(\frac{ac}N\biggr)
-\frac{\pi i}2\,\frac{1+\zeta_N^{ac}}{1-\zeta_N^{ac}}\,\Cl_2\biggl(\frac{2\pi a}N\biggr)B\biggl(\frac{bc}N\biggr)
\\ &\quad
+\frac{\pi}{2}\int_0^\infty\biggl(e_{a,bc}(iu)-e_{a,-bc}(iu)-\frac{1+\zeta_N^{bc}}{1-\zeta_N^{bc}}\biggr)
\bigl(e_{b,ac}(iu)+e_{b,-ac}(iu)\bigr)\,u
\\ &\quad\quad
-\biggl(e_{b,ac}(iu)-e_{b,-ac}(iu)-\frac{1+\zeta_N^{ac}}{1-\zeta_N^{ac}}\biggr)
\bigl(e_{a,bc}(iu)+e_{a,-bc}(iu)\bigr)\,u\,\d u
\displaybreak[2]\\ &\;
=\frac{\pi i}2\,\frac{1+\zeta_N^{bc}}{1-\zeta_N^{bc}}\,\Cl_2\biggl(\frac{2\pi b}N\biggr)B\biggl(\frac{ac}N\biggr)
-\frac{\pi i}2\,\frac{1+\zeta_N^{ac}}{1-\zeta_N^{ac}}\,\Cl_2\biggl(\frac{2\pi a}N\biggr)B\biggl(\frac{bc}N\biggr)
\\ &\quad
+\pi\int_0^\infty\bigl(e_{a,bc}(iu)e_{b,-ac}(iu)-e_{a,-bc}(iu)e_{b,ac}(iu)\bigr)\,u
\\ &\quad\quad
-\frac12\biggl(\frac{1+\zeta_N^{bc}}{1-\zeta_N^{bc}}\bigl(e_{b,ac}(iu)+e_{b,-ac}(iu)\bigr)
-\frac{1+\zeta_N^{ac}}{1-\zeta_N^{ac}}\bigl(e_{a,bc}(iu)+e_{a,-bc}(iu)\bigr)\biggr)u\,\d u
\displaybreak[2]\\ \intertext{(we apply Lemma~\ref{lem3})}
&\;
=\pi\int_0^\infty\biggl(f_{a,b;c}(iu)
+\frac12\,\frac{1+\zeta_N^{a}}{1-\zeta_N^{a}}\,\frac{1+\zeta_N^{ac}}{1-\zeta_N^{ac}}
-\frac12\,\frac{1+\zeta_N^{b}}{1-\zeta_N^{b}}\,\frac{1+\zeta_N^{bc}}{1-\zeta_N^{bc}}\biggr)\,u\,\d u,
\end{align*}
and the result follows by appealing to~\eqref{mellin}.
\end{proof}

\section{Applications}
\label{s3}

The modularity theorem guarantees that an \emph{elliptic} curve $C:P(x,y)=0$ can be parameterised
by modular functions $x(\tau)$ and $y(\tau)$, whose level~$N$ is necessarily the conductor of~$C$,
such that the pull-back of the canonical differential on $C$ is proportional to $2\pi if(\tau)\,\d\tau=f(\tau)\,\d q/q$,
where $f$ is (up to an isogeny) a normalised newform of weight 2 and level~$N$, which automatically
happens to be a cusp form and a Hecke eigenform. Computing the conductor of~$C$ and producing the
cusp form $f$ of this level give an efficient strategy to determine successively the coefficients
in the $q$-expansions of $x(\tau)=\varepsilon_1q^{-M_1}+\dotsb$ and $y(\tau)=\varepsilon_2q^{-M_2}+\dotsb$
subject to $P(x(\tau),y(\tau))=0$, where $\varepsilon_1$ and $\varepsilon_2$ are suitable nonzero constants.
The particular form of $q$-expansions only fixes a normalisation of $x(\tau)$ and $y(\tau)$
up to the action of the corresponding congruence subgroup $\Gamma_0(N)$.
Finally, it remains to verify whether $x(\tau)$ and $y(\tau)$ just found are modular units\,---\,modular functions
whose all zeroes and poles are at cusps (so that they admit eta-like product expansions);
if this is the case, we can use Theorem~\ref{MB} to compute the Mahler measure $\mm(P(x,y))$.
Note that the property of being a modular unit imposes a strong condition on the $q$-expansion
of the logarithmic derivative\,---\,it can be easily detected in practice by examining a couple of (hundred) terms
in the $q$-expansion of the latter.

In this section we touch the  `classical' family of Mahler measures
$$
\mm(xy^2+(x^2+kx+1)y+x)=\mm\Bigl(k+x+\frac1x+y+\frac1y\Bigr), \qquad k^2\in\mathbb Z\setminus\{0,16\},
$$
which goes back to the works \cite{B98,De97,RV99}. Namely, we will see that Theorem~\ref{MB} applies
in the cases when the corresponding zero locus
\begin{equation}
E:k+x+\frac1x+y+\frac1y=0
\label{mb02}
\end{equation}
can be parameterised by modular units.
For this family of tempered Laurent polynomials, equation \eqref{mm-reg} assumes the form
\begin{equation}
\mm\Bigl(k+x+\frac1x+y+\frac1y\Bigr)
=\mm(y^2+(k+x+x^{-1})y+1)
=\frac1{2\pi}\,r(\{x,y\})([\gamma]),
\label{mm-reg1}
\end{equation}
where $\gamma$ is a single closed path on $E\setminus\{(0,0)\}$ corresponding to the zero
$y_1(x)$ of $y^2+(k+x+x^{-1})y+1$ which satisfies $|y_1(x)|\ge1$.

The above general strategy restricted to the family~\eqref{mb02} was identified by Mellit in~\cite{Me11}
and illustrated by him on the example of $k=2i$; this is Example~\ref{ex2} below. The modular functions
$x$ and $y$ satisfying \eqref{mb02} are searched in the form $x(\tau)=(\varepsilon q)^{-1}+\dotsb$
and $y(\tau)=-(\varepsilon q)^{-1}+\dotsb$, where $\varepsilon\in\mathbb Z[k]$ is chosen so that
$k/\varepsilon$~is a positive integer. The condition on the pull-back of the canonical differential on~$E$ takes the form
\begin{equation*}
\frac{q\,(\d x/\d q)}{\varepsilon x(y-1/y)}=f,
\end{equation*}
where $f(\tau)$ is the corresponding Hecke eigenform of weight~2.

The computational part of the examples below was accomplished in \texttt{sage} and \texttt{gp-pari},
which allowed us to compute as many terms in the $q$-expansions of a modular parameterisation of a given elliptic curve as requested.
Assisted with this software, we were normally able to relate occurring \emph{modular} forms and functions
(for example, their product expansions) by computing and examining sufficiently many terms in their $q$-expansions.

Below we will have
occasional appearance of Dedekind's eta-function $\eta(\tau):=q^{1/24}\prod_{n=1}^\infty(1-q^n)$. We hope that this
extra eta notation does not cause any confusion with~\eqref{eta}, as it depends here on a \emph{single} variable,
which is always a rational multiple of $\tau$ from the upper halfplane.

\begin{example}
\label{ex1}
The most classical example corresponds to the choice $k=1$,
when the elliptic curve in~\eqref{mb02} has conductor $N=15$ and
can be parameterised by modular units
\begin{align*}
x(\tau)
&=\phantom-\frac1q\prod_{n=0}^\infty\frac{(1-q^{15n+7})(1-q^{15n+8})}{(1-q^{15n+2})(1-q^{15n+13})}
=\phantom-\frac{g_7(\tau)}{g_2(\tau)},
\\
y(\tau)
&=-\frac1q\prod_{n=0}^\infty\frac{(1-q^{15n+4})(1-q^{15n+11})}{(1-q^{15n+1})(1-q^{15n+14})}
=-\frac{g_4(\tau)}{g_1(\tau)},
\end{align*}
so that
$$
\frac{q\,(\d x/\d q)}{x(y-1/y)}=f_{15}(\tau):=\eta(\tau)\eta(3\tau)\eta(5\tau)\eta(15\tau)
$$
and the path of integration $\gamma$ in \eqref{mm-reg1} corresponds to the range of $\tau$ between
the two cusps $-1/5$ and $1/5$ of $\Gamma_0(15)$. Therefore, Theorem~\ref{MB} results in
\begin{align*}
\mm\Bigl(1+x+\frac1x+y+\frac1y\Bigr)
&=\frac1{2\pi}\Bigl(\int_{-1/5}^{i\infty}-\int_{1/5}^{i\infty}\Bigr)\eta(g_7/g_2,g_4/g_1)
\\
&=\frac1{8\pi^2}L(2f_{7,4;-3}-2f_{7,1;-3}-2f_{2,4;-3}+2f_{2,1;-3},2)
\\
&=\frac{15}{4\pi^2}L(f_{15},2),
\end{align*}
which is precisely Boyd's conjecture from~\cite{B98} first proven in~\cite{RZ13}.

Note that this evaluation implies some other Mahler measures, namely \cite{La10,LR07}
\begin{align*}
\mm\Bigl(5+x+\frac1x+y+\frac1y\Bigr)
&=\phantom06\mm\Bigl(1+x+\frac1x+y+\frac1y\Bigr)
\\
\mm\Bigl(16+x+\frac1x+y+\frac1y\Bigr)
&=11\mm\Bigl(1+x+\frac1x+y+\frac1y\Bigr),
\\
\mm\Bigl(3i+x+\frac1x+y+\frac1y\Bigr)
&=\phantom05\mm\Bigl(1+x+\frac1x+y+\frac1y\Bigr),
\end{align*}
though the corresponding elliptic curves
$k+x+1/x+y+1/y=0$ for $k=5$, $16$ and $3i$
are not parameterised by modular units.
\end{example}

\begin{example}[{\cite{Me11}}]
\label{ex2}
The modular parameterisation of \eqref{mb02} for $k=2i$ (the conductor of
elliptic curve is then $N=40$) and the corresponding Mahler measure evaluation
$$
\mm\Bigl(2i+x+\frac1x+y+\frac1y\Bigr)=\frac{10}{\pi^2}L(f_{40},2),
$$
where
$$
f_{40}(\tau):=\frac{\eta(\tau)\eta(8\tau)\eta(10\tau)^2\eta(20\tau)^2}{\eta(5\tau)\eta(40\tau)}
+\frac{\eta(2\tau)^2\eta(4\tau)^2\eta(5\tau)\eta(40\tau)}{\eta(\tau)\eta(8\tau)},
$$
were given in Mellit's talk~\cite{Me11}. He identifies $x(\tau)$ and $y(\tau)$ with
infinite products which are fully expressible by means of Ramanujan's lambda function
$$
\lambda(\tau)=q^{1/5}\prod_{n=1}^\infty(1-q^n)^{\left(\frac n5\right)}
=q^{1/5}\prod_{n=1}^\infty\frac{(1-q^{5n-1})(1-q^{5n-4})}{(1-q^{5n-2})(1-q^{5n-3})};
$$
namely,
\begin{align*}
x(\tau)&=-i\,\frac{\lambda(4\tau)}{\lambda(\tau)\lambda(8\tau)}
=-i\,\frac{g_2g_3g_7g_{13}g_{16}g_{17}g_{18}}{g_1g_6g_8g_9g_{11}g_{14}g_{19}},
\\
y(\tau)
&=\phantom-i\,\frac{\lambda(\tau)\lambda(2\tau)}{\lambda(8\tau)}
=\phantom-i\,\frac{g_1g_9g_{11}g_{16}g_{19}}{g_3g_7g_8g_{13}g_{17}}
\end{align*}
in the notation~\eqref{mb01} with $N=40$. The corresponding range of $\tau$
for the path $\gamma$ in \eqref{mm-reg1} is from $1/10$ to~$-2/5$.
\end{example}

\begin{example}
\label{ex3}
The elliptic curve \eqref{mb02} for $k=2$ has conductor $N=24$ and
admits parameterisation by modular units
$$
x(\tau)=\frac{g_1g_{10}g_{11}}{g_2g_5g_7},
\qquad
y(\tau)=-\frac{g_5g_7}{g_1g_{11}}.
$$
Theorem~\ref{MB} applies and produces the evaluation
\begin{align*}
\mm\Bigl(2+x+\frac1x+y+\frac1y\Bigr)
&=\frac1{2\pi}\Bigl(\int_{-1/8}^{i\infty}-\int_{1/8}^{i\infty}\Bigr)
\eta\biggl(\frac{g_1g_{10}g_{11}}{g_2g_5g_7},\frac{g_5g_7}{g_1g_{11}}\biggr)
\\
&=\frac{6}{\pi^2}L(f_{24},2),
\end{align*}
where $f_{24}(\tau):=\eta(2\tau)\eta(4\tau)\eta(6\tau)\eta(12\tau)$,
conjectured in~\cite{B98} and established in~\cite{RZ12}.
\end{example}

\begin{example}
\label{ex4}
For $N=17$, the pair of modular units
$$
x(\tau)=-i\,\frac{g_2g_8}{g_1g_4}, \qquad
y(\tau)=i\,\frac{g_6g_7}{g_3g_5}
$$
parameterise the elliptic curve $i+x+1/x+y+1/y=0$. Applying Theorem~\ref{MB} for $\tau$ ranging from $3/17$ to $-3/17$, we obtain
$$
\mm\Bigl(i+x+\frac1x+y+\frac1y\Bigr)
=\frac{17}{2\pi^2}L(f_{17},2),
$$
where
\begin{align*}
f_{17}(\tau)
:=\frac{q\,(\d x/\d q)}{ix(y-1/y)}
&=q-q^2-q^4-2q^5+4q^7+3q^8-3q^9+2q^{10}
\\ &\qquad
-2q^{13}-4q^{14}-q^{16}+q^{17}+O(q^{18}).
\end{align*}
This Mahler measure evaluation was conjectured in \cite[Table~4]{RV99}.
\end{example}

\begin{example}
\label{ex5}
Another conjecture in \cite[Table~4]{RV99},
$$
\mm\Bigl(\sqrt2+x+\frac1x+y+\frac1y\Bigr)
=\frac{7}{2\pi^2}L(f_{56},2),
$$
corresponds to $k=\sqrt2$ in~\eqref{mb02} and an elliptic curve over $\mathbb Z$ of conductor $N=56$.
The conjecture follows from parameterisation of the curve by the couple
\begin{align*}
x(\tau)
&=\phantom-\frac1{\sqrt2}\,\frac{\eta(\tau)\eta(4\tau)^2\eta(7\tau)\eta(28\tau)^2}{\eta(2\tau)^2\eta(8\tau)\eta(14\tau)^2\eta(56\tau)},
\\
y(\tau)
&=-\frac1{\sqrt2}\,\frac{\eta(2\tau)\eta(4\tau)\eta(14\tau)\eta(28\tau)}{\eta(\tau)\eta(7\tau)\eta(8\tau)\eta(56\tau)},
\end{align*}
so that
\begin{align*}
f_{56}(\tau)
:=\frac{q\,(\d x/\d q)}{\sqrt2\,x(y-1/y)}
&=q+2q^5-q^7-3q^9-4q^{11}+2q^{13}-6q^{17}+8q^{19}
\\ &\qquad
-q^{25}+6q^{29}+8q^{31}+O(q^{34}),
\end{align*}
and integration in Theorem~\ref{MB} for $\tau\in(-15/56,-7/56)\cup(5/56,13/56)$.
\end{example}

It is not clear whether there are finitely or infinitely many cases of the parameter $k$ in~\eqref{mb02}
subject to parameterisation by modular units. A possible approach in cases when such parameterisation
is not available is writing down algebraic relations between any two standard modular units \eqref{mb01} of a given level~$N$
and sieving the relations which may be used in producing the Mahler measures of 2-variable polynomials
which are potentially linked to the wanted Mahler measures by $K$-theoretic machinery~\cite{Co04,La10,LR07}.

Finding what curves $C:P(x,y)=0$ can be parameterised by modular units is an interesting question itself.
F.~Brunault notices some heuristics to the fact that
there are only finitely many function fields $F$ of a given genus~$g$ over $\mathbb Q$
which embed into the function field of a modular curve such that $F$ can be generated by modular units;
for $g\ge2$ this follows from \cite[Conjecture~1.1]{BGGP05}.

\section{3-variable Mahler measures}
\label{s4}

It would be desirable to have an analogue of Theorem~\ref{MB} for 3-variable Mahler measures
of (Laurent) polynomials $P(x,y,z)$ such that the intersection of the zero loci $P(x,y,z)=0$
and $P(1/x,1/y,1/z)=0$ defines an elliptic curve $E$, and $\mm(P)$ is presumably related
to the $L$-series of $E$ evaluated at $s=3$. No example of this type is established, and
one of the simplest evaluations is Boyd's conjecture \cite{B06}
$$
\mm\bigl((1+x)(1+y)-z\bigr)
\overset?=2L'(E_{15},-1)=\frac{225}{4\pi^4}L(E_{15},3).
$$

On the surface $(1+x)(1+y)-z=0$ we have
\begin{align*}
x\wedge y\wedge z
&=x\wedge y\wedge(1+x)(1+y)
=x\wedge y\wedge(1+x)+x\wedge y\wedge(1+y)
\\
&=-x\wedge(1+x)\wedge y+y\wedge(1+y)\wedge x
\\
&=-(-x)\wedge(1+x)\wedge y+(-y)\wedge(1+y)\wedge x.
\end{align*}
Applying the machinery described in
\cite[Section 5.2]{Co04} to the 3-variable polynomial $P(x,y,z)=(1+x)(1+y)-z$ we obtain
$$
m(P)=\frac1{4\pi^2}\int_\gamma\bigl(\omega(-x,y)-\omega(-y,x)\bigr),
$$
where
\begin{equation}
\omega(g,h):=D(g)\,\d\arg h+\frac13\bigl(\log|g|\,\d\log|1-g|-\log|1-g|\,\d\log|g|\bigr)\log|h|
\label{omega}
\end{equation}
and
\begin{align*}
\gamma&:=\{(x,y,z):|x|=|y|=|z|=1\}\cap\{(x,y,z):(1+x)(1+y)-z=0\}
\\ &\qquad
\cap\{(x,y,z):(1+x)(1+y)z-xy=0\}.
\end{align*}
Note that $\{(1+x)(1+y)-z=0\}\cap\{(1+x)(1+y)z-xy=0\}$ is the double cover of an elliptic curve of
conductor 15. Indeed, eliminating $z$ we can write (one half of) its equation as
$$
(1+x_1^2)(1+y_1^2)+x_1y_1=0
$$
in variables $x_1=\sqrt x$, $y_1=\sqrt y$, or
$$
x_2+1/x_2+y_2+1/y_2+1=0
$$
in variables $x_2=x_1y_1$, $y_2=x_1/y_1$. Using the parameterisation of the latter equation
by the modular units from Example~\ref{ex1} we find out that
$$
m(P)=\frac1{2\pi^2}\int_{-1/5}^{1/5}\bigl(\omega(X,Y)-\omega(Y,X)\bigr)
$$
where
$$
X(\tau):=\frac{g_4(\tau)g_7(\tau)}{g_1(\tau)g_2(\tau)}=q^{-2}+O(q^{-1})
\quad\text{and}\quad
Y(\tau):=\frac{g_1(\tau)g_7(\tau)}{g_2(\tau)g_4(\tau)}=1+O(q).
$$
Also note that
$$
1-X(\tau)=-\frac{g_6(\tau)g_7(\tau)}{g_1(\tau)g_3(\tau)}=-q^{-2}+O(q^{-1})
\quad\text{and}\quad
1-Y(\tau)=\frac{g_1(\tau)g_3(\tau)}{g_2(\tau)g_6(\tau)}=q+O(q^2)
$$
are modular units.

The problem with integrating the form \eqref{omega} is that it is, roughly speaking,
integrating the product of \emph{three} modular components: two of them are logarithms of modular functions
(hence of weight 0) and one is the logarithmic derivative of a modular function
(hence of weight 2). On the other hand, the expected data for applying the method from~\cite{RZ12}
used in our proof of Theorem~\ref{MB} in Section~\ref{s2} would be
integrating a product of \emph{two} Eisenstein series of weights $-1$ and~3 (see~\cite{Zu13} for details).

\begin{acknowledgements}
This note would be hardly possible without constant patience of A.~Mellit in
describing details of his work with F.~Brunault. I am pleased to thank Mellit
for all those lectures he delivered to me in person and by e-mail, as well as for providing
me with the sketch~\cite{Me12} of proof of what is stated here as Theorem~\ref{MB}.
I am deeply grateful to F.~Brunault, M.~Rogers and J.~Wan for their helpful assistance
on certain parts of this work. Finally, I thank the anonymous referee for her valuable
comments and healthy criticism that helped to improve the exposition.
\end{acknowledgements}


\end{document}